\documentclass[12pt,letterpaper]{amsart}

% Use utf-8 encoding for foreign characters
\usepackage[utf8]{inputenc}

%Babel
\usepackage[american]{babel}

\usepackage{hyperref}
\usepackage{anysize}
%\marginsize{left}{right}{top}{bottom}
\marginsize{2cm}{2cm}{2cm}{2cm}

%\usepackage{hyperref}

% Uncomment some of the following if you use the features
%
% Running Headers and footers
%\usepackage{fancyhdr}

% Multipart figures
%\usepackage{subfigure}

% More symbols
\usepackage{amsmath}
\usepackage{amssymb}
\usepackage{latexsym}
\usepackage{mathrsfs}

%Theorem
\usepackage{amsthm}

% Surround parts of graphics with box
%\usepackage{boxedminipage}

% Package for including code in the document
%\usepackage{listings}

% If you want to generate a toc for each chapter (use with book)
%\usepackage{minitoc}

%For color text
\usepackage{color}

%Add indented line
\usepackage{indentfirst} 

% This is now the recommended way for checking for PDFLaTeX:
\usepackage{ifpdf}

%\newif\ifpdf
%\ifx\pdfoutput\undefined
%\pdffalse % we are not running PDFLaTeX
%\else
%\pdfoutput=1 % we are running PDFLaTeX
%\pdftrue
%\fi

\ifpdf
\usepackage[pdftex]{graphicx}
\else
\usepackage{graphicx}
\fi

\theoremstyle{plain}
\newtheorem{theorem}{Theorem}[section]
\newtheorem{lemma}[theorem]{Lemma}
\newtheorem{corollary}[theorem]{Corollary}

\theoremstyle{definition}
\newtheorem{definition}[theorem]{Definition}%[section]
\newtheorem{conjecture}[theorem]{Conjecture}%[section]
%[section]
\newtheorem{problem}[theorem]{Problem}%[section]

\theoremstyle{remark}
\newtheorem{remark}[theorem]{Remark}

\numberwithin{equation}{section}

\newcounter{fig}
\newcommand{\f}{\refstepcounter{fig} Fig. \arabic{fig}. }

\title{Kadets type theorems for partitions of a convex body}

\author{Arseniy~Akopyan \and Roman Karasev}
\thanks{The research of A.V.~Akopyan is supported by the President's of Russian Federation grant MK-113.2010.1, the Russian Foundation for Basic Research grants 10-01-00096 and 11-01-00735, and the Russian government project 11.G34.31.0053.\\
The research of R.N.~Karasev is supported by the Dynasty Foundation, the President's of Russian Federation grant MK-113.2010.1, the Russian Foundation for Basic Research grants 10-01-00096 and 10-01-00139, the Federal Program ``Scientific and scientific-pedagogical staff of innovative Russia'' 2009--2013, and the Russian government project 11.G34.31.0053.}

\address{Arseniy Akopyan, Institute for Information Transmission Problems RAS\\ 
			Bolshoy Karetny per. 19, Moscow, Russia 127994  \\ \newline
Laboratory of Discrete and Computational Geometry, Yaroslavl' State University, Sovetskaya st. 14, Yaroslavl', Russia 150000}
% \\
\email{akopjan@gmail.com}

\address{Roman Karasev, Dept. of Mathematics, Moscow Institute of Physics and Technology, Institutskiy per. 9, Dolgoprudny, Russia 141700
 \newline
 Laboratory of Discrete and Computational Geometry, Yaroslavl' State University, Sovetskaya st. 14, Yaroslavl', Russia 150000}
\email{r\_n\_karasev@mail.ru}
\urladdr{http://www.rkarasev.ru/en/}
\begin{document}

\subjclass[2010]{52C15, 52C17, 52A40, 52A21}
\keywords{Tarski's plank problem, sum of inradii}
\maketitle

\begin{abstract}
For convex partitions of a convex body $B$ we try to put a homothetic copy of $B$ into each set of the partition so that the sum of homothety coefficients is $\ge 1$. In the plane this can be done for arbitrary partition, while in higher dimensions we need certain restrictions on the partition.
\end{abstract}

\section{Introduction}

Alfred~Tarski~\cite{tarski1932further} proved that for any covering of the unit disk by planks (the sets $a\le \lambda(x)\le b$ for a linear function $\lambda$ and two reals $a<b$) the sum of plank widths is at least $2$. Th{\o}ger Bang in~\cite{bang1951asolution} generalized this result for covering of a convex body $B$ in $\mathbb R^d$ by planks showing that the sum of the widths is at least the width of $B$. He also posed the following question: Can the plank widths in the Euclidean metric be replaced by the widths relative to $B$ (as in Definition~\ref{inrad-def} below)?

Keith~Ball proved the conjecture of Bang in~\cite{ball1991thepalnk} for centrally symmetric bodies $B$ or, in other words, for arbitrary normed spaces and coverings of the unit ball. For possibly non-symmetric $B$, it is known (see~\cite{alexander1968aproblem}) that the Bang conjecture is equivalent to the Davenport conjecture: If a convex body $B$ is sliced by $n-1$ hyperplane cuts then there exists a piece that contains a translate of~$\frac{1}{n}B$.

In~\cite{bezdek1995solution,bezdek1996conway} Andr\'as Bezdek and K\'aroly Bezdek proved an analogue of the Davenport conjecture for binary partitions by hyperplanes. The difference is that they do not cut everything with every hyperplane; instead they divide one part into two parts and then proceed recursively.

One of the strongest results about coverings of a unit ball for the Hilbert (and finite dimensional Euclidean) space was proved by Vladimir Kadets in~\cite{kadets2005coverings} (see also~\cite{bezdek2007onageneralization} for the proof in the two-dimensional case using the idea from~\cite{tarski1932further}): For any convex covering $C_1,\ldots, C_k$ of the unit ball the sum of inscribed ball radii $\sum_{i=1}^k r(C_i)$ is at least $1$. 

The reader is referred to~\cite{bezdek2009tarski} for a detailed historical survey on the Tarski plank problem.

In this paper we prove analogues of the Kadets theorem for inscribing homothetic copies of a (not necessarily symmetric) convex body, replacing arbitrary coverings by certain convex partitions. By a \emph{partition} of a convex set $B$ we mean a covering of $B$ by a family of closed convex sets with disjoint interiors. In the two-dimensional case the analogue of the Kadets theorem for possibly non-symmetric bodies (Theorem~\ref{kadets-2d}) holds for any partition, while in higher dimensions we need additional restrictions on the partition. In other words, we are solving positively certain particular cases of~\cite[Problem~7.2]{bezdek2009tarski} about extending the Kadets theorem to Banach spaces. 

We work in finite-dimensional spaces. If one needs analogues for infinite-dimensional Banach spaces then the standard approximation argument works as in~\cite{kadets2005coverings}.

\section{Inductive partitions}
\label{ind-sec}

Let us describe the class of partitions for which an analogue of the Kadets theorem is true:

\begin{definition}
Call a convex partition $V_1\cup\dots\cup V_k$ of $\mathbb R^d$ \emph{inductive} if for any $1\le i\le k$ there exists an inductive partition $W_1\cup\dots\cup W_{i-1}\cup W_{i+1}\cup\dots\cup W_k$ such that $W_j\supseteq V_j$ for any $j\neq i$. A partition into one part $V_1=\mathbb R^d$ is assumed to be inductive.
\end{definition}

Now we define the \emph{inradius} relative to $B$:

\begin{definition}
\label{inrad-def}
Let $B\subset \mathbb R^d$ be a convex body. For a convex set $C\subseteq \mathbb R^d$ define the analogue of the inscribed ball radius as follows:
\[
r_B(C) = \sup \{h \ge 0: \exists t\in \mathbb R^d\ \text{such that}\ hB+t\subseteq C\},
\]
and put $r_B(C)=-\infty$ for empty $C$.
\end{definition}

Now we are ready to state one of the main results:

\begin{theorem}
\label{kadets-ind}
Let $B\subset \mathbb R^d$ be a convex body and let $C_1\cup\dots\cup C_k = B$ be induced from an inductive partition $V_1\cup\dots\cup V_k=\mathbb R^d$ (that is $C_i=V_i\cap B$ for any $i$). Then
\[
\sum_{i=1}^k r_B(C_i) \ge 1.
\]
\end{theorem}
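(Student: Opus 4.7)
The plan is induction on $k$. The base case $k=1$ is immediate: $V_1=\mathbb R^d$ forces $C_1=B$, so $r_B(B)=1$.

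For the inductive step, use the defining property of inductive partitions: removing $V_k$, one obtains an inductive $(k-1)$-partition $W_1\cup\dots\cup W_{k-1}=\mathbb R^d$ with $W_j\supseteq V_j$ for $j<k$. Applying the inductive hypothesis to $B$ together with this coarser partition yields $\sum_{j=1}^{k-1} r_B(W_j\cap B)\ge 1$. The task then reduces to transferring this bound across the refinement $W_j\rightsquigarrow V_j\cup(W_j\cap V_k)$ by establishing the key inequality
\[
\sum_{j=1}^{k-1}\bigl(r_B(W_j\cap B)-r_B(V_j\cap B)\bigr)\le r_B(C_k).
\]
Geometrically, the left-hand side is the total gain in inradius as each $V_j$ is enlarged to $W_j$ by absorbing its share of $V_k$, and the right-hand side is the inradius of the piece $C_k$ that was split off.

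For each fixed $j<k$, I would analyze a largest homothet $\bar h_j B+\bar t_j\subseteq W_j\cap B$: either it lies entirely in $V_j$, contributing nonpositively to the sum, or it crosses the hyperplane $\Pi_j$ separating $V_j$ from $V_k$ by some depth $d_j$. Convexity of $B$ then allows one to contract the homothet toward a deep point $p\in V_j\cap B$, producing a homothet of smaller size entirely inside $V_j\cap B$; the loss in size is controlled by $d_j$ and by how deep $V_j\cap B$ extends inward from $\Pi_j$, giving a per-$j$ upper bound on $r_B(W_j\cap B)-r_B(V_j\cap B)$.

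The main obstacle is to combine these per-$j$ bounds into the global inequality above. A purely local approach via the two-piece single-cut lemma $r_B(W_j\cap B)\le r_B(V_j\cap B)+r_B(W_j\cap V_k\cap B)$ is insufficient, because the summed right-hand sides can exceed $r_B(C_k)$: already for the inductive partition of the plane into three $120^\circ$ sectors with $B$ the unit disk, the pieces $W_j\cap V_k$ are $60^\circ$ slices of the disk whose $B$-inradii are each $1/3$ and sum to $2/3$, strictly exceeding $r_B(C_k)=2\sqrt{3}-3\approx 0.464$. Thus the proof must perform a finer nonlocal accounting in which the inscribed homothet of $B$ inside $C_k$ plays the role of a single shared budget against which the penetrations $d_j$ are charged in a coordinated, not independent, fashion; establishing this coordination rigorously is the technical core of the argument.
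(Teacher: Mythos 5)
Your proposal sets up the right induction but leaves its technical core unproved, and you say so yourself: the transfer inequality $\sum_{j<k}\bigl(r_B(W_j\cap B)-r_B(V_j\cap B)\bigr)\le r_B(C_k)$ is exactly the hard step, your per-$j$ contraction argument only yields local bounds, and your own three-sector example shows that summing those local bounds naively overshoots the budget $r_B(C_k)$. No mechanism for the ``coordinated charging'' is supplied, so the argument does not close. The difficulty is real: comparing the fine partition $\{V_j\}$ with the coarse one $\{W_j\}$ at a moment when the removed piece $C_k$ is full-dimensional forces you to quantify how much inradius each $W_j$ gains by absorbing part of $V_k$, and there is no evident way to do this piecewise.

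The paper sidesteps this entirely. It introduces a translation parameter $y$ and considers $r(y)=\sum_i r_B\bigl(B\cap(V_i+y)\bigr)$, which is shown to be a concave function of $y$ (Lemmas~\ref{conv-inr} and~\ref{conv-inr-inters}, via a Helly-type reduction of the inradius of a polytope to inradii of simplices and simplicial cones, where concavity in the defining offsets is clear). On the boundary of the convex domain $Y=\{y: r(y)>-\infty\}$ some piece $B\cap(V_i+y)$ has empty interior; there the coarser inductive partition $\{W_j\}_{j\neq i}$ induces the \emph{same} partition of $B$, so the inductive comparison is an identity rather than an inequality, and induction gives $r\ge 1$ on $\partial Y$. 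Concavity then propagates the bound to all of $Y$ (after ruling out $Y$ being a halfspace, which forces $k=1$). In short: rather than removing a full-dimensional piece and paying for it, the paper first translates the partition until the piece to be removed degenerates, and the concavity of the sum of inradii is the engine that makes this reduction legitimate. To repair your proof you would need either to establish your transfer inequality directly (which appears to be at least as hard as the theorem) or to import this translation-and-concavity mechanism.
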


Before proving this theorem we need a lemma about the inradius:

\begin{lemma}
\label{conv-inr}
Let a convex polytope $C\subset \mathbb R^d$ be defined by linear inequalities for $i=1,\ldots, m$:
\[
\lambda_i(x) \le 0.
\]
Denote by $C(\bar y)$ the polytope defined by the inequalities
\[
\lambda_i(x) + y_i \le 0,
\]
where $\bar y = (y_1, \ldots, y_m)$ is a vector of reals. Then $r_B(C(\bar y))$ is a concave function of $\bar y$.
\end{lemma}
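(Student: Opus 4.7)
The plan is to verify concavity directly from the definition of $r_B$. Fix $\bar y_1,\bar y_2\in\mathbb R^m$ and $\alpha\in[0,1]$, and put $\bar y:=\alpha\bar y_1+(1-\alpha)\bar y_2$. If either $r_B(C(\bar y_j))$ equals $-\infty$, the inequality
\[
r_B(C(\bar y))\ge \alpha\, r_B(C(\bar y_1))+(1-\alpha)\,r_B(C(\bar y_2))
\]
is vacuous, so assume both values are finite and nonnegative. For arbitrary $h_j<r_B(C(\bar y_j))$ pick $t_j\in\mathbb R^d$ with $h_jB+t_j\subseteq C(\bar y_j)$. I claim that the convex combinations $h:=\alpha h_1+(1-\alpha)h_2$ and $t:=\alpha t_1+(1-\alpha)t_2$ satisfy $hB+t\subseteq C(\bar y)$; letting $h_j\nearrow r_B(C(\bar y_j))$ then proves concavity.

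The verification of the inclusion is the sole computation. Each $\lambda_i$ is an affine function, so for every $b\in B$
\[
\lambda_i(hb+t) = \alpha\,\lambda_i(h_1b+t_1)+(1-\alpha)\,\lambda_i(h_2b+t_2)\le -\alpha y_{1,i}-(1-\alpha)y_{2,i} = -\bar y_i,
\]
where the middle inequality uses the hypothesis $\lambda_i(h_jb+t_j)+y_{j,i}\le 0$ coming from $h_jB+t_j\subseteq C(\bar y_j)$. Hence $\lambda_i(hb+t)+\bar y_i\le 0$ for every $i$, i.e.\ $hb+t\in C(\bar y)$.

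I do not expect a real obstacle here: the statement is a concrete instance of the general fact that the optimal value of a linear program is a concave function of the right-hand sides of its constraints. Writing $\lambda_i(x)=\ell_i(x)-c_i$ and $h_B^i:=\max_{b\in B}\ell_i(b)$, one sees that $r_B(C(\bar y))$ is the optimum of the LP ``maximize $h$ subject to $h\,h_B^i+\ell_i(t)\le c_i-y_i$ and $h\ge 0$'', whose right-hand side depends linearly on $\bar y$. The convex-combination argument above is exactly the standard sensitivity-analysis proof specialized to this LP. The only bookkeeping is the convention $r_B(\emptyset)=-\infty$ and the distinction between supremum and maximum in the definition, both handled by the approximation $h_j\nearrow r_B(C(\bar y_j))$.
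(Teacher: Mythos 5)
Your proof is correct, and it takes a genuinely different route from the paper. You verify concavity directly from the definition: if $h_jB+t_j\subseteq C(\bar y_j)$ for $j=1,2$, then the convex combination $hB+t$ with $h=\alpha h_1+(1-\alpha)h_2$, $t=\alpha t_1+(1-\alpha)t_2$ sits inside $C(\alpha\bar y_1+(1-\alpha)\bar y_2)$, because each point $hb+t$ is the convex combination $\alpha(h_1b+t_1)+(1-\alpha)(h_2b+t_2)$ and the affine constraints respect convex combinations. The paper instead uses Helly's theorem to write $r_B(C(\bar y))$ as an infimum of $r_B(C_I(\bar y))$ over index sets $I$ with $|I|\le d+1$, and then analyzes the structure of each $C_I(\bar y)$ (product with a subspace, simplicial cone, or simplex) to show each term is concave. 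Your argument is shorter, avoids the case analysis and the induction on dimension hidden in the paper's proof, and --- since it never uses that the constraints are finitely many affine inequalities, only that each set is convex and behaves well under convex combinations --- it applies verbatim to the paper's Lemma~\ref{conv-inr-inters} on intersections of translates of arbitrary convex bodies, eliminating the polytope-approximation step there as well. Two small points of bookkeeping: you set aside the case where some $r_B(C(\bar y_j))=+\infty$, but your limiting argument with $h_j$ arbitrary below the supremum covers it without change; and the identification with LP sensitivity analysis in your last paragraph, while correct in spirit, needs the observation that $hB+t\subseteq\{x:\lambda_i(x)+y_i\le0\}$ is equivalent to the single linear constraint $h\,h_B^i+\ell_i(t)\le c_i-y_i$, which holds because the supremum of the linear part of $\lambda_i$ over $hB+t$ scales as stated --- this is fine but is exactly the content you already proved directly, so the LP remark is a reformulation rather than an independent argument.
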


\begin{proof}
Denote the set of indices $[m]=\{1,\ldots, m\}$. By the Helly theorem we have 
\[
r_B(C(\bar y)) = \inf_{I\subseteq [m],\ |I|\le d+1} r_B(C_I(\bar y)),
\]
where $C_I(\bar y)$ is defined by the inequalities $\lambda_i(x) + y_i \le 0$ for $i\in I$. 
The sets $C_I(\bar y)$ are either Cartesian products of a linear subspace $L\subset\mathbb R^d$ of positive dimension with a lower-dimensional polyhedral set $C'_I(\bar y)$, or simplicial cones, or simplices. In the first case we use induction on the dimension. In the second case we note that $r_B(C_I(\bar y))=+\infty$. In the third case the function $r_B(C_I(\bar y))$ is obviously linear. Hence for any $C_I(\bar y)$ the inradius $r_B(C_I(\bar y))$ is a concave function of $\bar y$. Therefore the inradius $r_B(C(\bar y))$ is concave as an infimum of concave functions.
\end{proof}

\begin{lemma}
\label{conv-inr-inters}
Let $C_1,\ldots, C_m$ be a family of convex bodies in $\mathbb R^d$. Then the inradius of the intersection of translates
\[
r_B\left ((C_1+y_1)\cap (C_2+y_2)\cap\dots\cap (C_m+y_m) \right)
\]
is a concave function of $\bar y = (y_1, \ldots, y_m)\in (\mathbb R^d)^{\times m}$.
\end{lemma}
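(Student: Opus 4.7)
My plan is to give a direct geometric proof of the concavity, parallel in spirit to Lemma~\ref{conv-inr} but bypassing its Helly-theorem reduction. The statement will follow from two uses of convexity: convexity of $B$, which governs how Minkowski sums of scaled copies behave, and convexity of each $C_i$, which kills convex combinations of $C_i$ with itself.

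First I would fix two points $\bar y^{(0)}, \bar y^{(1)} \in (\mathbb R^d)^{\times m}$ at which the inradii $h^{(0)}, h^{(1)}$ are finite. Since each $C_i$ is bounded, so is each intersection $\bigcap_i(C_i + y_i^{(j)})$, and by compactness I can pick witnesses $t^{(0)}, t^{(1)} \in \mathbb R^d$ with $h^{(j)} B + t^{(j)} \subseteq \bigcap_i (C_i + y_i^{(j)})$ for $j=0,1$. For $\lambda \in [0,1]$, let $\bar y$, $h$, $t$ denote the corresponding convex combinations. For each $i$ I then verify
\[
hB + t = (1-\lambda)(h^{(0)}B + t^{(0)}) + \lambda(h^{(1)}B + t^{(1)}) \subseteq (1-\lambda)(C_i + y_i^{(0)}) + \lambda(C_i + y_i^{(1)}) = C_i + y_i,
\]
where the first equality uses $(1-\lambda) h^{(0)} B + \lambda h^{(1)} B = hB$ (from convexity of $B$) and the last equality uses $(1-\lambda)C_i + \lambda C_i = C_i$ (from convexity of $C_i$). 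This produces an inscribed translate of $hB$ in the intersection at $\bar y$, so the inradius there is at least $h = (1-\lambda) h^{(0)} + \lambda h^{(1)}$, which is the required concavity.

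The only real obstacle I anticipate is bookkeeping for the degenerate cases in the definition of $r_B$: if one inradius is $-\infty$ (empty intersection) the concavity inequality is vacuous, and if one is $+\infty$ then scaling the inscribed copies shows the inradius is $+\infty$ along the whole segment. An alternative route, which would explain the placement of this lemma immediately after Lemma~\ref{conv-inr}, is to express each translate $C_i + y_i$ as an intersection of supporting half-spaces whose offsets are affine functions of $y_i$, apply Lemma~\ref{conv-inr} to finite subfamilies, and pass to the infimum; concavity is preserved under both affine precomposition and infima. The direct argument above seems cleaner and does not require the approximation.
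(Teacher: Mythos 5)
Your direct argument is correct, but it is a genuinely different route from the paper's. The paper proves this lemma by reducing to the polytope case of Lemma~\ref{conv-inr} (each translate $C_i+y_i$ being cut out by half-spaces whose offsets depend affinely on $y_i$) and then approximating general bodies by polytopes; Lemma~\ref{conv-inr} in turn rests on Helly's theorem and a case analysis of the subsystems $C_I(\bar y)$. Your proof instead verifies concavity by hand: given witnesses $h^{(j)}B+t^{(j)}\subseteq\bigcap_i(C_i+y_i^{(j)})$, the identities $(1-\lambda)h^{(0)}B+\lambda h^{(1)}B=hB$ (convexity of $B$) and $(1-\lambda)C_i+\lambda C_i=C_i$ (convexity of $C_i$) yield $hB+t\subseteq C_i+y_i$ for every $i$, hence $r_B\ge h$ at the convex combination. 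This is sound: the existence of maximizing witnesses follows from compactness of the $C_i$ (and even without attainment one could take near-optimal $h^{(j)}$ and pass to the limit), the same argument with points in place of inscribed copies shows that the domain $\{\bar y: r_B>-\infty\}$ is convex, and the degenerate cases are handled as you say. What you lose relative to the paper is only generality: Lemma~\ref{conv-inr} allows the facet offsets $y_i$ to vary independently, not just in the rigid-translation pattern, and your Minkowski-sum identity $(1-\lambda)C+\lambda C=C$ is exactly what restricts you to translates of a fixed body. Since the proof of Theorem~\ref{kadets-ind} only ever translates the $V_i$, your more elementary argument suffices for the application and avoids both the approximation step and the Helly machinery; your closing observation that the lemma also follows from Lemma~\ref{conv-inr} by writing each $C_i+y_i$ as an intersection of supporting half-spaces and passing to the infimum is precisely the paper's route.
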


\begin{proof}
When $C_i$'s are polytopes this is a particular case of Lemma~\ref{conv-inr}. The general case is made by approximating $C_i$'s by polytopes and going to the limit.
\end{proof}

\begin{remark}
In the above lemmas we actually prove that the set of vectors $\bar y$ such that the considered function of $\bar y$ is $>-\infty$ is a convex closed set.
\end{remark}

\begin{proof}[Proof of Theorem~\ref{kadets-ind}]
Let us vary the vector $y\in\mathbb R^d$ and define $C_i(y) = B\cap (V_i+y)$. The function $r(y) = \sum_{i=1}^k r_B(C_i(y))$ is a concave function of $y$ by Lemma~\ref{conv-inr-inters} and the set $Y = \{y : r(y)> -\infty \}$ is a convex closed set. If $y$ is in the boundary of $Y$ then at least one of $C_i(y)$ has empty interior. In this case we can omit the corresponding $V_i$ and consider a smaller partition $\{W_j\}_{j\neq i}$, which induces the same partition $\{(W_j+y)\cap B\}_{j\neq i}$ as $\{(V_j+y)\cap B\}$ up to sets with empty interior.

Thus by induction we have $r(y)\ge 1$ on $\partial Y$. Along with the concavity of $r(y)$ this implies $r(y)\ge 1$ on the whole $Y$ unless $Y$ is a halfspace. From the obvious formula (the sum is the Minkowski sum)
\[
Y=\bigcap_{i=1}^k (B + (-V_i))
\]
it follows that $Y$ can be a halfspace if and only if every $V_i$ contains the same halfspace. This is impossible unless $k=1$; but for $k=1$ the theorem is obviously true.
\end{proof}

\section{Affine partitions}
\label{aff-sec}

In this section we describe constructively a certain class of inductive partitions.

\begin{definition}
For a sequence of affine (linear with possible constant term) functions $F=\{\lambda_1,\ldots, \lambda_k\}$ define an \emph{affine partition} $P(F)$ of $\mathbb R^d$ by
\[
C_i = \{x\in\mathbb R^d : \forall j\neq i\ \text{we have}\ \lambda_i(x)\le \lambda_j(x) \}.
\]
An affine partition of a subset $X\subset \mathbb R^d$ is defined as a restriction of an affine partition of the whole $\mathbb R^d$.
\end{definition}

\begin{remark}
Affine partitions are also known as \emph{generalized Voronoi partitions} but we use the term \emph{affine} for brevity.
\end{remark}

\begin{corollary}
\label{kadets-aff}
Let $B\subset \mathbb R^d$ be a convex body and let $C_1\cup\dots\cup C_k = B$ be its affine partition. Then
\[
\sum_{i=1}^k r_B(C_i) \ge 1.
\]
\end{corollary}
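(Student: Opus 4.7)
The plan is to reduce Corollary~\ref{kadets-aff} to Theorem~\ref{kadets-ind} by showing that every affine partition of $\mathbb R^d$ is inductive. Once this structural fact is established, the corollary is immediate, since the partition $C_1\cup\dots\cup C_k=B$ is then induced from an inductive partition $V_1\cup\dots\cup V_k=\mathbb R^d$ in the sense required by the theorem.

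To prove that the affine partition $P(F)=\{V_1,\dots,V_k\}$ associated with $F=\{\lambda_1,\dots,\lambda_k\}$ is inductive, I would argue by induction on $k$. The base case $k=1$ is trivial by definition. For the inductive step, I fix any $i\in\{1,\dots,k\}$ and consider the affine partition $P(F\setminus\{\lambda_i\})=\{W_j\}_{j\neq i}$ obtained by deleting the function $\lambda_i$. By the inductive hypothesis this smaller affine partition is itself inductive, so it is a legitimate candidate to verify the defining property. It remains to check the containment $W_j\supseteq V_j$ for every $j\neq i$. But $W_j$ is cut out by the inequalities $\lambda_j(x)\le\lambda_\ell(x)$ for $\ell\notin\{i,j\}$, whereas $V_j$ is cut out by these \emph{together with} the additional inequality $\lambda_j(x)\le\lambda_i(x)$; so $V_j\subseteq W_j$ is automatic. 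This shows $P(F)$ is inductive.

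With the structural lemma in hand, the corollary is obtained by taking the ambient inductive partition $\{V_i\}$ of $\mathbb R^d$ provided by the definition of $P(F)$, observing that $C_i=V_i\cap B$, and applying Theorem~\ref{kadets-ind}.

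The main obstacle, if any, is conceptual rather than technical: one must notice that deleting a single site from an affine/generalized-Voronoi diagram can only \emph{enlarge} the remaining cells (since each cell is defined by fewer constraints), so that the inductive structure required in Section~\ref{ind-sec} is present for free. No further estimates or continuity arguments are needed beyond those already used in the proof of Theorem~\ref{kadets-ind}.
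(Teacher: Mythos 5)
Your proposal is correct and follows essentially the same route as the paper: both reduce the corollary to Theorem~\ref{kadets-ind} by observing that deleting a function $\lambda_i$ from the defining family yields an affine partition whose cells $W_j$ are cut out by fewer inequalities and hence contain the original $V_j$. Your write-up merely makes the induction on $k$ and the constraint-counting argument more explicit than the paper does.
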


\begin{proof}
It suffices to show that any affine partition is inductive. Starting from $V_1\cup\dots\cup V_k= \mathbb R^d$ defined by $\{\lambda_1, \ldots, \lambda_k\}$ we omit $\lambda_i$ from the list and obtain another affine partition $\{W_j\}_{j\neq i}$ such that $W_j\supseteq V_j$ for any $j\neq i$. So the induction step is possible.
\end{proof}

A straightforward generalization of an affine partition is a \emph{hierarchical affine partition}:

\begin{definition}
By induction: If in a hierarchical affine partition $C_1\cup\dots\cup C_k$ we partition some $C_i$ by an affine partition, we obtain again a hierarchical affine partition.
\end{definition}

Let us show that a hierarchical affine partition is a limit of affine partitions:

\begin{lemma}
Suppose $B$ is a convex body and $C_1\cup\dots\cup C_k=B$ is its hierarchical affine partitions. Then this partition can be approximated by an affine partition with arbitrary precision in the Hausdorff metric. 
\end{lemma}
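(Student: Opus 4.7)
My plan is to induct on the number of affine sub-partition operations used in building the hierarchical partition; the base case of a single affine partition is trivial. For the inductive step I would write the given hierarchical partition as a top-level affine partition of $\mathbb R^d$ with affine functions $\lambda_1,\ldots,\lambda_k$, one of whose cells, say $C_i$, is further subdivided hierarchically. Restricted to $C_i$ this inner hierarchical partition uses strictly fewer subdivision operations, so by the inductive hypothesis it can be approximated in the Hausdorff metric by an affine partition of $\mathbb R^d$ with affine functions $\mu_1,\ldots,\mu_m$.

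Given a small $\varepsilon > 0$, I would then form the single affine partition of $\mathbb R^d$ associated with the combined family
\[
\{\lambda_j : j \neq i\} \;\cup\; \{\lambda_i + \varepsilon \mu_l : l = 1,\ldots,m\},
\]
and restrict it to $B$. On the interior of $C_j$ for $j \neq i$ one has $\lambda_j < \lambda_i$ strictly, so since the $\mu_l$ are uniformly bounded on the compact set $B$, away from $\partial C_j$ the inequality $\lambda_j < \lambda_i + \varepsilon \mu_l$ survives for every $l$ once $\varepsilon$ is sufficiently small. On the interior of $C_i$ the roles reverse: every $\lambda_i + \varepsilon \mu_l$ beats every $\lambda_j$ with $j \neq i$, and among themselves these $m$ functions are minimized at exactly the same index as $\mu_l$. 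Thus, away from the boundary $\partial C_i$, the new affine partition recovers either an original top-level cell $C_j$ (for $j \neq i$) or the inductive approximation of a sub-cell of $C_i$.

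The main obstacle will be controlling the Hausdorff error near $\partial C_i$, where the comparisons between $\lambda_i$ and its competitors are tight and the perturbation $\varepsilon \mu_l$ may shift cell walls in an $l$-dependent way. Here I would observe that within the bounded set $B$ the hyperplanes $\{\lambda_i + \varepsilon \mu_l = \lambda_j\}$ differ from $\{\lambda_i = \lambda_j\}$ by at most $O(\varepsilon)$, so the new cells differ from their intended counterparts in Hausdorff distance by at most $O(\varepsilon)$. Combining this with the inductive approximation of the sub-partition of $C_i$ applied at precision smaller than $\varepsilon$, and sending $\varepsilon \to 0$, yields the desired approximation.
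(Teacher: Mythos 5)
Your proposal is correct and is essentially the paper's own argument: unrolling your induction, the combined functions $\lambda_i+\varepsilon\mu_l$ become exactly the paper's lexicographic perturbations $\lambda_{v_0}+\varepsilon\lambda_{v_1}+\dots+\varepsilon^m\lambda_{v_m}$ along each root-to-leaf chain of the hierarchy tree. The paper simply writes this closed form at once and declares the convergence obvious, whereas you organize it as an induction and supply the $O(\varepsilon)$ wall-shift estimates (just take care to fix the inner approximation, and hence the $\mu_l$, before choosing $\varepsilon$, so that the required smallness of $\varepsilon$ is not circular).
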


\begin{proof}
From the definition we know that there exists a graded tree $T$ with an affine function $\lambda_v$ in every vertex $v\in T$ such that the sets $C_i$ correspond to the leaves $\ell_i$ of $T$; the condition $x\in C_i$ is equivalent to $\lambda_v(x)\le \lambda_w(x)$ for any $v$ in the ancestors of $\ell_i$ and $w$ a sibling of $v$. 

Now we take small enough $\varepsilon>0$ and for any $C_i$ and its corresponding $\ell_i$ consider the full chain from the root $v_0<v_1<\dots < v_m=\ell_i$ and the corresponding affine function:
$$
\lambda_{i,\varepsilon} = \lambda_{v_0} + \varepsilon \lambda_{v_1} + \dots + \varepsilon^m \lambda_{v_m}.
$$
Now it is obvious that the affine partition of $B$ corresponding to $\{\lambda_{i,\varepsilon}\}_{i=1}^k$ tends to $\{C_i\}_{i=1}^k$ when $\varepsilon$ tends to $+0$.
\end{proof}

Even without this lemma it is obvious that Corollary~\ref{kadets-aff} holds for hierarchical affine partitions by induction. Note that a binary partition by hyperplanes is a particular case of a hierarchical affine partition.

\section{The two-dimensional case}
\label{two-dim-sec}

Now we are ready to prove an analogue of the Kadets theorem in the plane. The key property of an inductive partition in the proof of Theorem~\ref{kadets-ind} is actually the following: we consider convex partitions $C_1\cup\dots\cup C_k=B$ that can be extended to a convex partition $V_1\cup\dots\cup V_k=\mathbb R^d$. Then we can translate $V_i$'s with $y$ so that one of the sets $C_i=V_i\cap B$ disappears, remove $C_i$, extend the partition $\{C_j\}_{j\neq i}$ again to a new partition of the whole space, and so on.

In the plane the extension is always possible by the following:

\begin{lemma}
\label{part-ext}
Any convex partition $C_1\cup\dots\cup C_k=B\subset \mathbb R^2$ can be extended to a partition $V_1\cup\dots\cup V_k=\mathbb R^2$.
\end{lemma}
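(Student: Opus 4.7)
My plan is to prove the lemma by induction on the number of pieces $k$. The base case $k=1$ is immediate with $V_1 = \mathbb R^2$.

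For the inductive step, I would extend each internal edge of the partition beyond each of its endpoints on $\partial B$ along the line containing the edge, producing a collection of rays in $\mathbb R^2 \setminus B$. Together with $\partial B$ these rays subdivide $\mathbb R^2 \setminus B$ into convex cells, and each exterior cell must then be assigned to some piece $V_i$ so that $V_i \supseteq C_i$ is convex and $V_i \cap B = C_i$. At a boundary vertex $p \in \partial B$ carrying a single internal edge, the extension is unambiguous: the edge and its extension together form a line passing smoothly through $p$, so the two exterior cells adjacent to $p$ are naturally absorbed into the two pieces meeting at $p$, with local convexity automatic.

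The delicate case is a boundary vertex $p$ where $m \ge 2$ internal edges meet. The $m$ extension rays cut the outside half-disk at $p$ into $m+1$ subsectors, while the inside of $B$ contributes $m+1$ sectors corresponding to the pieces meeting at $p$. I claim that the $m+1$ outside subsectors can always be distributed among these pieces so that each combined angular sector at $p$ has measure at most $\pi$, ensuring local convexity of every $V_i$. Since the angles around $p$ sum to $2\pi$ and there are at least two pieces meeting at $p$, an assignment respecting the adjacency around $p$ is always achievable, for instance by iteratively merging each outside subsector with one of its neighbouring inside sectors.

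The main obstacle will be the global consistency of these local choices, since a single exterior cell is typically bounded by extension rays emanating from two distinct boundary vertices and its membership must agree with the choice made at each endpoint. I plan to handle this by a perturbation argument: slightly perturb the partition so that every boundary vertex carries exactly one internal edge (the generic position, in which the construction of the previous paragraphs is completely unambiguous), apply that construction, and then take the Hausdorff limit as the perturbation tends to zero. Each $V_i$ in the perturbed construction is a finite intersection of closed half-planes whose defining parameters depend continuously on the perturbation, so the limit exists; the technical crux will be to verify that the limiting family $\{V_i\}$ is still a convex partition of $\mathbb R^2$ and still satisfies $V_i \cap B = C_i$.
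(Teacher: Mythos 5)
There is a genuine gap, and it sits precisely at the point you yourself flag as ``the main obstacle'': the assignment of exterior cells that are not adjacent to any boundary vertex. Your absorption rule only covers the cells touching $\partial B$ near a vertex, and your proposed fix --- perturb to the generic position where each boundary vertex carries one internal edge, then pass to a Hausdorff limit --- does not remove the difficulty, because extension rays issued from \emph{different} boundary points still cross in $\mathbb R^2\setminus B$ for an already generic partition. Concretely, let $B=[0,10]\times[0,1]$ be cut by the segments $[(4,0),(5,1)]$ and $[(7,0),(6,1)]$. The extension rays leaving $(5,1)$ in direction $(1,1)$ and leaving $(6,1)$ in direction $(-1,1)$ meet at $(5.5,1.5)$, and the upward wedge at that crossing is a cell adjacent to no boundary vertex and to no arc of $\partial B$. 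It cannot be given to the middle piece (the union would be a triangle with an opposite wedge glued at its apex, which is not convex), and it can be given to the left piece only if the portion of the second ray beyond the crossing is \emph{deleted}, so that the left piece's region becomes the half-plane $\{y\ge x-4\}$. This deletion step --- repeatedly locate the transversal intersection of two rays closest to $B$ and erase one of the rays beyond it --- is the actual content of the paper's proof, and nothing in your proposal performs it or substitutes for it; taking a limit of perturbed configurations only reproduces the same unassigned (or non-convexly assigned) cells, since the crossing persists under small perturbations.

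A secondary inaccuracy: at a boundary vertex where $m\ge 2$ internal edges meet, only the two extreme pieces (the ones owning an arc of $\partial B$ at that vertex) can absorb exterior angle; each of the $m-1$ middle pieces meets $\partial B$ only at the point itself, so its local cone is bounded by two internal edges, and attaching any exterior subsector to it either disconnects that cone or breaks convexity. Hence the exterior subsectors cannot be ``distributed among these pieces'' by merging each with a neighbouring inside sector as you claim; they must all be split between the two extreme pieces by a single separating ray chosen in the cone antipodal to the span of the internal edges, which is in effect what the paper does by drawing exactly one ray per boundary vertex. Your perturbation would sidestep this particular point, but not the crossing problem above.
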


\begin{proof}

The boundary $\partial B$ consists of parts of the boundaries $\partial C_i$. Denote the vertices of this partition by $a_1$, $a_2$, \dots, $a_n$. Denote the polygon $a_1a_2\dots a_n$ by $A$. Note that for some $C_i$'s we may have more than one corresponding part of $\partial B$.

Obviously, from each point $a_i$ it is possible to draw a ray $\ell_i$ outside $B$ with the following property: For any set $C_j$ and its corresponding boundary segment $[a_ia_{i+1}]$ (the indices are understood cyclically $a_{n+1}=a_1$) the union of $C_j$ and the area that is bounded by $[a_ia_{i+1}]$, $\ell_i$, and $\ell_{i+1}$ is convex. For the rays $\ell_i$ one can take the extension of the interior with respect to $B$ side of $C_j$ after $a_i$.

Our goal is to erase parts of the rays $\ell_i$ and obtain a partition of  $\mathbb{R}^2\setminus A$ into $n$ convex parts. At the start the rays may partition $\mathbb{R}^2\setminus A$ into a larger number of parts.

We perform erasing as follows. Suppose $b_j$ is a point of transversal intersection of two rays $\ell_s$ and $\ell_t$ that is closer to $A$ than other points of transversal intersection of the remaining rays. Note that the segments $b_ja_s$ and $b_ja_t$ do not intersect with other rays $\ell_i$ transversally (in this case the point of intersection would be closer to $A$ than $b_j$). Erase part of one of the rays $\ell_s$ or $\ell_t$ after $b_i$, and start new iteration of this process again. Some rays actually become segments, but it does not matter.

After each step we have a convex partition of $\mathbb{R}^2\setminus A$ and finally we obtain a partition into exactly $n$ parts.

After taking union of these parts with their corresponding sets $C_i$ we obtain the required extension of the partition to the whole plane.

\begin{center}
\includegraphics{figure-1.mps}	
\f \label{fig:separation}
{Extending the partition.}
\end{center}
\end{proof}

Now we are ready to state the result:

\begin{theorem}
\label{kadets-2d}
Let $B\subset \mathbb R^2$ be a convex body and let $C_1\cup\dots\cup C_k = B$ be its convex partition. Then
$$
\sum_{i=1}^k r_B(C_i) \ge 1.
$$
\end{theorem}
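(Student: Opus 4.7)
The plan is to reproduce the argument from the proof of Theorem~\ref{kadets-ind} almost word-for-word, replacing the inductiveness assumption with an application of Lemma~\ref{part-ext}. I will argue by induction on the number of parts $k$. For $k=1$ the only part is $C_1=B$ itself, so $r_B(C_1)=1$ and the inequality is trivial.

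For the inductive step, I first use Lemma~\ref{part-ext} to extend the given partition $C_1\cup\dots\cup C_k=B$ to a convex partition $V_1\cup\dots\cup V_k=\mathbb R^2$ with $C_i=V_i\cap B$. Following the scheme of the proof of Theorem~\ref{kadets-ind}, I translate the whole picture by $y\in\mathbb R^2$, setting $C_i(y)=B\cap(V_i+y)$ and $r(y)=\sum_{i=1}^k r_B(C_i(y))$. By Lemma~\ref{conv-inr-inters} the function $r$ is concave on the convex closed set
\[
Y=\{y\in\mathbb R^2 : r(y)>-\infty\}=\bigcap_{i=1}^k(B+(-V_i)),
\]
and the original configuration corresponds to $y=0\in Y$.

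For any $y_0\in\partial Y$ at least one piece $C_{i_0}(y_0)$ has empty interior, so that $r_B(C_{i_0}(y_0))=0$; after discarding that piece the remaining family $\{C_j(y_0)\}_{j\neq i_0}$ is still a convex partition of $B$ into $k-1$ parts (every point of $C_{i_0}(y_0)$ lies on $\partial(V_{i_0}+y_0)$ and hence in some adjacent closed $V_j+y_0$). The inductive hypothesis then yields $r(y_0)\ge 1$. Concavity of $r$ propagates this bound to all of $Y$, in particular to $y=0$, unless $Y$ happens to be a halfspace; but the displayed formula for $Y$ shows that this occurs only when every $V_i$ contains one common halfspace, which is impossible for $k\ge 2$. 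The step I expect to be the most delicate is verifying this boundary reduction, namely that the reduced family at a boundary point genuinely covers $B$ (rather than only almost all of $B$) and therefore forms a convex partition to which the inductive hypothesis legitimately applies.
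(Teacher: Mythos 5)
Your proposal is correct and follows essentially the same route as the paper: extend the partition to all of $\mathbb R^2$ via Lemma~\ref{part-ext}, use concavity of $r(y)$ from Lemma~\ref{conv-inr-inters} to reduce to a boundary point of $Y$ where a piece degenerates, and induct on $k$; you merely spell out the concavity/halfspace argument that the paper's short proof of Theorem~\ref{kadets-2d} imports implicitly from the proof of Theorem~\ref{kadets-ind}. The boundary reduction you flag as delicate is handled correctly by your parenthetical separation-type remark, and the paper itself treats this point no more carefully (``up to sets with empty interior'').
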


\begin{proof}
We extend the partition $C_1\cup\dots\cup C_k = B$ to $V_1\cup\dots\cup V_k=\mathbb R^2$ by Lemma~\ref{part-ext}. Then the function 
$$
r(y) = \sum_{i=1}^k r_B(B\cap (V_i+y))
$$
is again concave, so by varying $y$ we can make one of $B\cap (V_i+y)$ have empty interior without increasing $r(y)$. Then we omit $V_i$, obtain a partition of $B$ into fewer parts, and use the inductive assumption.
\end{proof}

\section{Possible extension to coverings}

Theorem~\ref{kadets-2d} is quite close to the plane case of the Bang conjecture, which we restate here: If $B\subset \mathbb R^2$ is covered by a set of planks $W_1\cup\dots\cup W_k\supseteq B$ then $\sum_{i=1}^k r_B(W_i) \ge 1$. The key difference is that in the Bang conjecture we have a covering, not a partition. Intuitively, partition is something smaller than covering and therefore has smaller sum of ``inradii''. But already in the case of $\mathbb R^2$ there exist coverings that do not contain partitions. A simple example is a set of planks $C_i$ passing thorough the center of a disk $B$, forming a ``sunflower'' so that each of the sets $C_i\cap\partial B$ consists of two disjoint arcs and these arcs partition $\partial B$.

It is easily verified that Theorem~\ref{kadets-ind} holds (with the same proof literally) for coverings instead of partitions if we define an inductive covering by:

\begin{definition}
Call a convex covering (by closed sets) $V_1\cup\dots\cup V_k$ of $\mathbb R^d$ \emph{inductive} if for any $1\le i\le k$ there exists an inductive covering $W_1\cup\dots\cup W_{i-1}\cup W_{i+1}\cup\dots\cup W_k$ such that $W_j\subseteq V_j\cup V_i$ for any $j\neq i$. A covering by one set $V_1=\mathbb R^d$ is assumed to be inductive.
\end{definition}

Returning to the Bang conjecture we see the unpleasant thing: If we cover some part of $\mathbb R^2$ by planks and the remaining part is covered by the corresponding (possibly infinite) polygons, then none of the polygons can be deleted, so this is an example of a non-inductive covering of the plane.

\section{Notes on the spherical Kadets theorem}

In~\cite{bezdek2010covering} K\'{a}roly~Bezdek and Rolf~Schneider proved the following version of the Kadets theorem in the spherical geometry:

\begin{theorem}[K.~Bezdek, R.~Schneider, 2010]
\label{bez-schn}
If a the sphere $\mathbb S^n$ is covered by spherical convex sets $K_i$ then we have the inequality for the inradii:
$$
\sum_i r(K_i) \ge \pi.
$$
\end{theorem}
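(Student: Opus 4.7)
The natural plan would be to emulate the inductive strategy behind Theorem~\ref{kadets-ind}. The base case $n=1$ is immediate: a spherically convex subset of $\mathbb{S}^1$ is an arc of length $\ell\le\pi$ with inradius $\ell/2$, so covering $\mathbb{S}^1$ (of total length $2\pi$) by such arcs forces $\sum_i r(K_i)\ge\pi$ at once. For $n\ge 2$ the question is what plays the role of the translation $y\mapsto C_i(y)=B\cap(V_i+y)$ that drove the Euclidean argument, because the sphere carries no ambient space in which to translate.

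The only reasonable substitute is the action of $\mathrm{SO}(n+1)$ by rotations. My plan would be, for one-parameter subgroups $R_{t_1},\dots,R_{t_m}\subset\mathrm{SO}(n+1)$, to study the intersections $R_{t_1}K_{i_1}\cap\dots\cap R_{t_m}K_{i_m}$ and prove a spherical analog of Lemma~\ref{conv-inr-inters}: that the spherical inradius of such an intersection is concave, in a suitable geodesic sense, in the parameters $t_j$. Granted that concavity, the variational argument from the proof of Theorem~\ref{kadets-ind} should locate a rotation at which one of the inscribed spherical caps degenerates to empty interior, whereupon the corresponding covering set can be discarded and the induction on the number of pieces closes.

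The main obstacle is precisely this concavity step. The rotation group is compact and positively curved, so the polytope-and-Helly reasoning that gave Lemma~\ref{conv-inr} in $\mathbb{R}^d$ does not transfer verbatim, and geodesic concavity on $\mathrm{SO}(n+1)$ is a more delicate object to control. A reasonable workaround is to pass through the gnomonic projection from an open hemisphere to $\mathbb{R}^n$, under which geodesics go to straight lines and spherically convex sets to Euclidean convex sets; this would reduce the "local" part of the problem to the framework already established in Section~\ref{ind-sec}. The remaining subtlety is that the gnomonic projection is only infinitesimally conformal, so extracting the sharp constant $\pi$ from a Euclidean Kadets-type bound requires a careful analysis of how the spherical inradius distorts, and a separate argument (for instance patching two antipodal hemispheres or lifting to cones in $\mathbb{R}^{n+1}$ and using a chord-radius comparison) is needed to handle the fact that a proper spherical convex set is always confined to one hemisphere and so never translates "off the sphere".
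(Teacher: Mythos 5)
Your proposal is a plan rather than a proof, and the step you yourself flag as ``the main obstacle'' is not the only one: the variational mechanism of Theorem~\ref{kadets-ind} has no counterpart here at all. In the Euclidean argument the body $B$ is held fixed while the pieces $V_i$ are translated by a common vector $y$, and the quantity $r_B(B\cap(V_i+y))$ degenerates precisely when $V_i+y$ slides off $B$; the induction closes by deleting that piece. On $\mathbb S^n$ the ``body'' is the whole sphere. Applying one rotation to all the $K_i$ simultaneously is an isometry of the configuration and changes nothing (each $r(R_tK_i)=r(K_i)$ identically, so your functional is constant), while applying different rotations to different $K_i$ destroys the covering property, which is exactly the hypothesis you need to preserve. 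Since $\mathrm{SO}(n+1)$ is compact and $R_tK_i$ is never empty, there is no boundary of a parameter domain at which a piece degenerates, so the induction on the number of pieces never starts. A further difficulty is that the statement concerns coverings, not partitions: even in $\mathbb R^d$ the delete-a-piece induction requires the covering to be \emph{inductive} in the sense of the paper's later section, and a general covering need not permit the removal of any set. The gnomonic-projection workaround only sees one open hemisphere, whereas every $K_i$ in a covering of the full sphere must interact with both.

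The paper's actual proof is of an entirely different, volumetric nature, and it is worth seeing why that works here. Lemma~\ref{isoperimetry} shows that a spherically convex $K$ with inradius $r$ satisfies $\sigma(K)\le\sigma(K_0)=\frac{2r}{2\pi}\sigma_n$, where $K_0$ is a lune (intersection of two hemispheres) of angle $2r$ with the same inscribed ball; summing this over a covering gives $\sigma_n\le\sum_i\sigma(K_i)\le\frac{\sigma_n}{\pi}\sum_i r(K_i)$, i.e.\ $\sum_i r(K_i)\ge\pi$. The isoperimetric-type bound is itself obtained from a correlation inequality for starshaped sets (Lemma~\ref{star-corr}, proved by needle decomposition into thin cones) and a lower bound for $\varepsilon$-neighborhoods of sets not contained in a hemisphere (Lemma~\ref{eps-neighborhoods}). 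The paper explicitly notes that this volumetric route is what makes the full-sphere case (and more generally $\rho\ge\pi/2$ in Problem~\ref{kadets-on-sphere}) tractable, and that no such argument is known for small spherical balls, where the problem degenerates to the original Kadets theorem. Your $n=1$ base case is correct, but it is precisely the one-dimensional instance of this volume count, not of the translation argument.
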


This theorem gives rise to the following:

\begin{problem}[K.~Bezdek, R.~Schneider, 2010]
\label{kadets-on-sphere}
Suppose $B_\rho\subset \mathbb S^n$ is a ball of radius $\rho$ in the spherical geometry. Suppose $B_\rho$ is covered by spherical convex sets $K_i$; prove that
$$
\sum_i r(K_i)\ge \rho.
$$ 
\end{problem}

As it is noted in~\cite{bezdek2010covering} Theorem~\ref{bez-schn} solves this problem for $\rho\ge \pi/2$, the solution being essentially volumetric. But if $\rho\to 0$ then this problem approaches the original Kadets theorem, which has no volumetric solution for $n>2$. So it seems that solving this problem for $\rho$ in the range $(0, \pi/2)$ must require a new approach.

Let us outline the proof of Theorem~\ref{bez-schn}. This proof is essentially the same as the proof given in~\cite{bezdek2010covering}; but we simplify it and split into several lemmas, some of which may be of interest on their own.

\begin{lemma}
\label{star-corr}
Let $\mu$ be a spherically symmetric absolute continuous measure on $\mathbb R^n$, $B$ be a ball centered at the origin, and $T$ be a $0$-starshaped body. Then
$$
\mu(B\cap T) \mu(\mathbb R^n) \ge \mu(B) \mu(T).
$$
\end{lemma}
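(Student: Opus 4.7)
The plan is to reduce the inequality to a one-dimensional Chebyshev-type correlation inequality via polar coordinates. Write $\mu$ in the form $d\mu = f(r)\,r^{n-1}\,dr\,d\sigma$, where $r = |x|$, $d\sigma$ is the standard surface measure on the unit sphere $S^{n-1}$, and $f \ge 0$ is the radial density of $\mu$. Set $d\nu(r) = f(r)\,r^{n-1}\,dr$ on $[0,\infty)$.

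Next, exploit the $0$-starshaped property of $T$. For each unit vector $u \in S^{n-1}$, let $\rho_T(u) = \sup\{t \ge 0 : tu \in T\}$; since $T$ is $0$-starshaped, we have $T = \{ru : 0 \le r \le \rho_T(u)\}$. Define the angular function
\[
q(r) = \sigma\bigl(\{u \in S^{n-1} : \rho_T(u) \ge r\}\bigr),
\]
i.e., the normalized $(n{-}1)$-area of $T \cap rS^{n-1}$ (scaled down to $S^{n-1}$). By the starshaped property, $q$ is non-increasing in $r$. If $B$ is the centered ball of radius $R$, then $\mathbf{1}_{[0,R]}(r)$ is also a non-increasing function of $r$.

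In these coordinates the four quantities become
\[
\mu(T) = \int_0^\infty q(r)\,d\nu(r), \qquad \mu(B) = \sigma(S^{n-1})\int_0^R d\nu(r),
\]
\[
\mu(B\cap T) = \int_0^R q(r)\,d\nu(r), \qquad \mu(\mathbb R^n) = \sigma(S^{n-1}) \int_0^\infty d\nu(r),
\]
so after cancelling $\sigma(S^{n-1})$ the desired inequality is exactly
\[
\Bigl(\int_0^\infty \mathbf{1}_{[0,R]}(r)\,q(r)\,d\nu(r)\Bigr)\Bigl(\int_0^\infty d\nu(r)\Bigr) \ge \Bigl(\int_0^\infty \mathbf{1}_{[0,R]}(r)\,d\nu(r)\Bigr)\Bigl(\int_0^\infty q(r)\,d\nu(r)\Bigr).
\]
This is the standard Chebyshev correlation inequality applied to the two non-increasing functions $\mathbf{1}_{[0,R]}$ and $q$ against the positive measure $\nu$; it follows, for instance, by integrating $(\mathbf{1}_{[0,R]}(r) - \mathbf{1}_{[0,R]}(s))(q(r) - q(s)) \ge 0$ over $d\nu(r)\,d\nu(s)$.

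The only real step to verify is the monotonicity of $q(r)$, which is the content of the $0$-starshaped assumption and is the sole place where that hypothesis is used; everything else is bookkeeping in polar coordinates. No difficulty is anticipated beyond being careful that $\mu$ need not be finite (if $\mu(\mathbb R^n) = \infty$ the inequality is trivial once $\mu(B\cap T) > 0$, and if $\mu(B\cap T) = 0$ one reduces to the case $\mu(T) = 0$ via the monotonicity of $q$).
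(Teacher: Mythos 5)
Your proof is correct, but it slices the problem along the opposite axis from the paper. The paper disintegrates $\mathbb R^n$ into thin convex cones (a toy needle decomposition): spherical symmetry makes $\mu(V_i)$ and $\mu(B\cap V_i)$ the same for every cone, so it suffices to prove the inequality cone by cone, and in the limit of cones shrinking to rays the three sets become intervals $[0,\min\{x,y\}]$, $[0,x]$, $[0,y]$ and the per-needle statement is just $\min\{X,Y\}\,Z\ge XY$ for $X,Y\in[0,Z]$. You instead integrate out the angular variable first: the radial profile $q(r)=\sigma(\{u:\rho_T(u)\ge r\})$ is non-increasing precisely because $T$ is $0$-starshaped, the ball contributes the non-increasing indicator $\mathbf{1}_{[0,R]}$, and after cancelling $\sigma(S^{n-1})$ the claim is Chebyshev's correlation inequality for two similarly ordered functions against $d\nu=f(r)r^{n-1}\,dr$, proved by the usual two-point symmetrization. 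Both arguments use spherical symmetry and starshapedness at the analogous points; yours avoids the paper's slightly informal limiting step (cones tending to rays) at the cost of invoking Chebyshev rather than the trivial $\min\{X,Y\}Z\ge XY$, and your closing remarks on the degenerate cases $\mu(\mathbb R^n)=\infty$ and $\mu(B\cap T)=0$ are handled correctly. Either route is a complete proof.
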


\begin{proof}
The proof will use a very simple case of the needle decomposition (see~\cite{nazarov2002geometric} for example). Let us split $\mathbb R^n$ into convex cones $V_i$ of equal measures $\mu(V_i)$. Note that the sets $V_i\cap B$ will also have equal measures because of the spherical symmetry of $\mu$. The lemma will follow from the inequality:
\begin{equation}
\label{cone-part-ineq}
\mu(B\cap T\cap V_i) \mu(V_i) \ge \mu(B\cap V_i) \mu(T\cap V_i)
\end{equation}
by summation. The partition can be made so that every $V_i$ gets arbitrarily close to a $1$-dimensional ray, and the limit case of (\ref{cone-part-ineq}) becomes an inequality for nonnegative functions:
$$
\int_0^{\min\{x, y\}} f(t)\, dt \cdot \int_0^{+\infty} f(t)\, dt \ge \int_0^x f(t)\, dt \cdot \int_0^y f(t)\, dt,
$$
which simply follows from the observation that $\min\{X, Y\} Z \ge XY$ for any $X,Y\in [0, Z]$.
\end{proof}

\begin{lemma}
\label{sph-corr}
Let us work in the spherical geometry. Suppose $H\subset \mathbb S^n$ is a hemisphere with center $o$, $B$ is a ball of radius $\le \pi/2$ centered at $o$, $T$ is an $o$-starshaped body in $H$. Then
$$
\sigma(B\cap T) \sigma(H) \ge \sigma(B) \sigma(T)
$$
for the standard measure $\sigma$ on the sphere.
\end{lemma}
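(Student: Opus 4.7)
The plan is to reduce the spherical inequality to its Euclidean counterpart, Lemma~\ref{star-corr}, via the gnomonic (central) projection from the center of the sphere onto the tangent hyperplane at $o$. Identify $\mathbb{R}^n$ with the affine hyperplane tangent to $\mathbb{S}^n$ at $o$ (so that $o$ corresponds to the origin) and let $\pi : \operatorname{int} H \to \mathbb{R}^n$ denote this projection.

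The key properties I would record are the following. First, $\pi$ sends great-circle arcs through $o$ bijectively onto rays from the origin in $\mathbb{R}^n$; consequently, $o$-starshaped subsets of $H$ correspond to $0$-starshaped subsets of $\mathbb{R}^n$, and the spherical ball $B$ of radius $\rho \le \pi/2$ goes to the Euclidean ball $B'$ of radius $\tan \rho$ centered at the origin (with the convention $\tan(\pi/2) = +\infty$, i.e.\ all of $\mathbb{R}^n$, which causes no problem since in that case $B = H$ up to a null set and the inequality is trivial). Second, the pushforward $\mu := \pi_{\ast}(\sigma|_H)$ is absolutely continuous with respect to Lebesgue measure, since $\pi$ is a smooth diffeomorphism on the open hemisphere. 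Third, $\pi$ intertwines rotations fixing $o$ with Euclidean rotations fixing the origin, and $\sigma|_H$ is invariant under the former, so $\mu$ is spherically symmetric in the sense required by Lemma~\ref{star-corr}.

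With these properties in hand, set $T' = \pi(T \cap \operatorname{int} H)$, which is a $0$-starshaped subset of $\mathbb{R}^n$. Applying Lemma~\ref{star-corr} to $\mu$, $B'$, and $T'$ gives
\[
\mu(B' \cap T')\,\mu(\mathbb{R}^n) \;\ge\; \mu(B')\,\mu(T'),
\]
and translating back via $\mu(X) = \sigma(\pi^{-1}(X))$ yields the four identifications $\mu(\mathbb{R}^n) = \sigma(H)$, $\mu(B') = \sigma(B)$, $\mu(T') = \sigma(T)$, and $\mu(B' \cap T') = \sigma(B \cap T)$ — the last three using that the equator $\partial H$ has $\sigma$-measure zero, so restricting to $\operatorname{int} H$ discards no mass.

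The only real subtlety is the behavior of $\pi$ at the equator $\partial H$, where it becomes singular and pushes points to infinity; this is harmless because $\partial H$ is a $\sigma$-null set and $\mu$ assigns the correct total mass $\sigma(H)$ to all of $\mathbb{R}^n$. Once that measure-theoretic bookkeeping is in place, the argument is a direct transfer of Lemma~\ref{star-corr} through the gnomonic chart.
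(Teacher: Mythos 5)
Your proposal is correct and is precisely the paper's argument: the paper proves this lemma in one line by central (gnomonic) projection of $H$ onto $\mathbb R^n$ sending $o$ to the origin and then invoking Lemma~\ref{star-corr}. Your write-up simply supplies the routine verifications (spherical symmetry and absolute continuity of the pushforward measure, starshapedness being preserved, the equator being a null set) that the paper leaves implicit.
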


\begin{proof}
Follows from Lemma~\ref{star-corr} by central projection of $H$ onto $\mathbb R^n$ such that $o$ goes to $0$.
\end{proof}

\begin{lemma}
\label{eps-neighborhoods}
Let $X$ be s subset of the sphere $\mathbb S^n$ not contained in an open hemisphere, and $X_0$ be a set consisting of two antipodal points on the sphere. Then for their $\varepsilon$-neighborhoods (in the spherical geometry) we have:
$$
\sigma(X + \varepsilon) \ge \sigma(X_0 + \varepsilon).
$$
\end{lemma}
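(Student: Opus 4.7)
The plan is to prove the lemma by induction on $n$, reducing at each step to the same problem on a great codimension-one subsphere. First note that for $\varepsilon \ge \pi/2$ the inequality is immediate: since $X$ is not contained in an open hemisphere, every $y \in \mathbb{S}^n$ lies at spherical distance at most $\pi/2$ from some $x \in X$, so $X + \pi/2 = \mathbb{S}^n$; the same holds for the antipodal pair $X_0$, and both sides equal $\sigma(\mathbb{S}^n)$. It therefore suffices to treat $\varepsilon \in [0, \pi/2]$, which I do henceforth; after replacing $X$ by its closure the hypothesis becomes $0 \in \mathrm{conv}(X) \subset \mathbb{R}^{n+1}$.

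By positive Carath\'eodory in $\mathbb{R}^{n+1}$ there exist $x_1, \ldots, x_k \in X$ with $k \le n+1$ and positive coefficients $\lambda_1, \ldots, \lambda_k$ of sum $1$ such that $\sum_i \lambda_i x_i = 0$; write $Y := \{x_1, \ldots, x_k\}$. This nontrivial linear relation forces $\mathrm{span}(Y)$ to have dimension at most $k - 1 \le n$, so $Y$ lies on some great codimension-one subsphere $\mathbb{S}^{n-1} \subset \mathbb{S}^n$, cut out by an $n$-dimensional linear subspace $V \supseteq \mathrm{span}(Y)$. The relation $\sum_i \lambda_i y_i = 0$ in $V$ with all $\lambda_i > 0$ shows that no open hemisphere of $\mathbb{S}^{n-1}$ can contain $Y$---otherwise some $w \in V$ would give $y_i \cdot w > 0$ for every $i$, contradicting $\sum_i \lambda_i (y_i \cdot w) = 0$. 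Thus the inductive hypothesis applies to $Y$ on $\mathbb{S}^{n-1}$.

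To pass from $\mathbb{S}^{n-1}$ back up to $\mathbb{S}^n$, parameterize $\mathbb{S}^n$ by $(\phi, \hat y) \in (-\pi/2, \pi/2) \times \mathbb{S}^{n-1}$ via $y = \cos\phi \cdot \hat y + \sin\phi \cdot v$, where $v$ is a unit normal to $V$; the spherical measure factors as $d\sigma_{\mathbb{S}^n} = \cos^{n-1}\phi \, d\phi \, d\sigma_{\mathbb{S}^{n-1}}$. For any $x \in \mathbb{S}^{n-1}$ the identity $\cos d_{\mathbb{S}^n}(y, x) = \cos\phi \cdot \cos d_{\mathbb{S}^{n-1}}(\hat y, x)$ makes the slice of $Y + \varepsilon$ at ``height'' $\phi$ empty for $|\phi| > \varepsilon$ and equal to the $\alpha(\phi)$-neighborhood of $Y$ in $\mathbb{S}^{n-1}$ otherwise, where $\alpha(\phi) := \arccos(\cos\varepsilon / \cos\phi) \in [0, \pi/2]$. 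Hence
\[
\sigma_{\mathbb{S}^n}(Y + \varepsilon) = \int_{-\varepsilon}^{\varepsilon} \cos^{n-1}\phi \cdot \sigma_{\mathbb{S}^{n-1}}\bigl((Y + \alpha(\phi))_{\mathbb{S}^{n-1}}\bigr) \, d\phi \ge 2\int_{-\varepsilon}^{\varepsilon} \cos^{n-1}\phi \cdot \sigma_{\mathbb{S}^{n-1}}(C_{\alpha(\phi)}) \, d\phi,
\]
where the inequality applies the inductive hypothesis on each slice. Running the same slicing on a single cap $B_{\mathbb{S}^n}(p, \varepsilon)$ centered at $p \in \mathbb{S}^{n-1}$ identifies the right-hand integral as $\sigma_{\mathbb{S}^n}(C_\varepsilon)$, so $\sigma_{\mathbb{S}^n}(Y + \varepsilon) \ge 2\sigma_{\mathbb{S}^n}(C_\varepsilon) = \sigma(X_0 + \varepsilon)$; since $Y \subseteq X$ this gives the conclusion. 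The base $n = 1$ is subsumed: the Carath\'eodory step forces $|Y| = 2$ and the linear relation between unit vectors $y_1, y_2$ forces $y_2 = -y_1$, so $Y$ is itself an antipodal pair. The main subtle point is preserving the ``not contained in an open hemisphere'' condition when descending to the subsphere, which is exactly why the Carath\'eodory reduction must be arranged with strictly positive coefficients.
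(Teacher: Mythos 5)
There is a genuine gap at the Carath\'eodory step, and it is fatal to the inductive scheme. In $\mathbb R^{n+1}$ Carath\'eodory's theorem only guarantees a representation $0=\sum_{i=1}^k\lambda_i x_i$ with $k\le n+2$, not $k\le n+1$, and the larger bound cannot be improved: take $X$ to be the $n+2$ vertices of a regular simplex inscribed in $\mathbb S^n$. These points are not contained in any closed hemisphere (so the lemma's hypothesis holds), the only linear dependencies among them are multiples of $x_1+\dots+x_{n+2}=0$, so every positive dependence uses all $n+2$ of them, and they span all of $\mathbb R^{n+1}$; hence no subset $Y$ witnessing $0\in\operatorname{conv}(X)$ lies on a great codimension-one subsphere. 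Already for $n=1$ (three equally spaced points on the circle) your reduction produces nothing: $X$ contains no antipodal pair, so there is no $Y\subset\mathbb S^0$ to which the base case could apply. Your argument therefore proves the lemma only for those $X$ admitting a positively dependent subset of at most $n+1$ points; the generic case is exactly the one left untouched. The surrounding pieces are fine --- the dispatch of $\varepsilon\ge\pi/2$, the equivalence of the hemisphere condition with $0\in\operatorname{conv}(X)$, the slicing computation with $\alpha(\phi)=\arccos(\cos\varepsilon/\cos\phi)$, and the identification of the resulting integral with $2\sigma(B_*(\varepsilon))$ --- and would complete the induction if the reduction to a subsphere were available, but it is not.

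For comparison, the paper avoids dimension reduction entirely: it takes $X=\{o_1,\dots,o_m\}$ finite, partitions $\mathbb S^n$ into the Voronoi regions $V_i$ of the $o_i$, observes that the hemisphere hypothesis forces $V_i$ to lie in the hemisphere $H_i$ centered at $o_i$, and applies the correlation inequality of Lemma~\ref{sph-corr} to the $o_i$-starshaped set $V_i$ inside $H_i$, giving $\sigma(V_i\cap B_{o_i}(\varepsilon))\,\sigma_n/2\ge\sigma(B_{o_i}(\varepsilon))\,\sigma(V_i)$; summing over $i$ yields the claim in one stroke. If you want to salvage an inductive approach, you need a separate mechanism for the case where the minimal positive dependence involves $n+2$ spanning points, and that is where the real content of the lemma lies.
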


\begin{proof}
Without loss of generality let $X=\{o_1, \ldots, o_m\}$ be finite. Consider the hemispheres $H_i$ with respective centers $o_i$ and the Voronoi regions $V_i$ of $o_i$. Note that $V_i\subseteq H_i$ for every $i$. Denote the measure of the whole sphere $\mathbb S^n$ by $\sigma_n$.

Then by Lemma~\ref{sph-corr}
$$
\sigma(V_i\cap B_{o_i}(\varepsilon)) \frac{\sigma_n}{2} \ge \sigma(B_{o_i}(\varepsilon)) \sigma(V_i),
$$
hence 
$$
\sigma(V_i\cap (X+\varepsilon)) \frac{\sigma_n}{2} \ge \sigma(B_{o_i}(\varepsilon)) \sigma(V_i),
$$
and then by summing over $i$ and multiplying by $2$:
$$
\sigma(X+\varepsilon) \sigma_n \ge 2 \sigma(B_*(\varepsilon)) \sigma_n,
$$
where $B_*(\varepsilon)$ is any ball (on the sphere) of radius $\varepsilon$. So we obtain:
$$
\sigma(X+\varepsilon) \ge 2 \sigma(B_*(\varepsilon)) = \sigma(X_0+\varepsilon).
$$
\end{proof}

\begin{lemma}
\label{isoperimetry-eu}
Let $\mu$ be an absolute continuous spherically symmetric measure on $\mathbb R^n$.
Suppose $K$ is a convex body in $\mathbb R^n$ with inscribed ball $B$, centered at the origin. Then 
$$
\mu(K) \le \mu (T)
$$
where $T$ is a plank with inscribed ball $B$.
\end{lemma}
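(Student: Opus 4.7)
My plan is to reduce the lemma to a stochastic-dominance statement on the unit sphere that is exactly the content of Lemma~\ref{eps-neighborhoods}.

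First, I use the standard characterisation of an inscribed ball: there exist unit outer normals $u_1,\dots,u_m$ ($m\le n+1$) of support hyperplanes of $K$ tangent to $B$ at the contact points $ru_i$, with $0\in\operatorname{conv}(u_1,\dots,u_m)$; equivalently, $X=\{u_1,\dots,u_m\}\subset\mathbb S^{n-1}$ is not contained in an open hemisphere. This gives $K\subseteq\bigcap_i\{x:\langle x,u_i\rangle\le r\}$. Writing $d\mu=f(|x|)\,dx$ and $G(s):=\int_0^s f(t)t^{n-1}\,dt$, polar integration (using that $K$ is star-shaped with respect to the origin) yields
\[
\mu(K)=\int_{\mathbb S^{n-1}} G(\rho_K(v))\,d\omega(v),\qquad \mu(T)=\int_{\mathbb S^{n-1}} G\bigl(r/|\langle v,u_0\rangle|\bigr)\,d\omega(v),
\]
where $\rho_K$ is the radial function of $K$ and $T=\{x:|\langle x,u_0\rangle|\le r\}$ is an arbitrary plank with inscribed ball $B$. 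The half-space inclusion supplies the pointwise estimate $\rho_K(v)\le r/h(v)$ with $h(v):=\max_i(\langle v,u_i\rangle)_+$, and since $G$ is non-decreasing the lemma reduces to
\[
\int_{\mathbb S^{n-1}} G\bigl(r/h(v)\bigr)\,d\omega(v)\le \int_{\mathbb S^{n-1}} G\bigl(r/|\langle v,u_0\rangle|\bigr)\,d\omega(v).
\]

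To prove this I establish stochastic dominance of $h$ over $|\langle\cdot,u_0\rangle|$ on $\mathbb S^{n-1}$. For $t\in(0,1]$ and $\varepsilon=\arccos t$, the superlevel set $\{v:h(v)\ge t\}=\bigcup_i\{v:\langle v,u_i\rangle\ge t\}$ is precisely the spherical $\varepsilon$-neighbourhood of $X$, while $\{v:|\langle v,u_0\rangle|\ge t\}$ is the $\varepsilon$-neighbourhood of the antipodal pair $\{u_0,-u_0\}$. Because $X$ is not contained in an open hemisphere, Lemma~\ref{eps-neighborhoods} (whose proof is dimension-independent and so applies on $\mathbb S^{n-1}$ as well as on $\mathbb S^n$) gives $\sigma(X+\varepsilon)\ge\sigma(\{u_0,-u_0\}+\varepsilon)$, which is the desired stochastic dominance. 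Since $s\mapsto G(r/s)$ is non-increasing, the layer-cake formula then delivers the preceding integral inequality, and hence $\mu(K)\le\mu(T)$.

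The main conceptual step is recognising that the inscribed-ball hypothesis on $K$ translates exactly into the ``not in an open hemisphere'' hypothesis of Lemma~\ref{eps-neighborhoods} applied to the contact normals; after that the argument is routine rearrangement. The only minor technicality is the set $\{v:h(v)=0\}$, on which the bound $\rho_K\le r/h$ is vacuous, but this set lies in the orthogonal complement of $\operatorname{span}\{u_i\}$ and hence has $\omega$-measure zero (adjoining a few extra tangent directions to $\{u_i\}$ so that they span does not destroy the condition $0\in\operatorname{conv}$ and only strengthens $h$).
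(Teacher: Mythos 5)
Your proof is correct and essentially identical to the paper's: both reduce the claim, via radial integration, to comparing the $\varepsilon$-neighborhood of the (rescaled) contact set of the inscribed ball --- not contained in an open hemisphere precisely because $B$ is maximal --- with that of an antipodal pair, which is exactly Lemma~\ref{eps-neighborhoods}. The paper organizes this by slicing with concentric spheres $S$ and proving $\sigma(K\cap S)\le\sigma(T\cap S)$ directly, while you integrate in polar coordinates and phrase the same comparison as stochastic dominance plus the layer-cake formula; the substance is the same.
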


\begin{proof}
Representing the measure $\mu$ as an integral it is enough to prove the inequality:
\begin{equation}
\label{sph-ineq}
\sigma(K\cap S) \le \sigma (T\cap S)
\end{equation}
for any sphere $S$ centered at the origin. Let the radii of $B$ and $S$ be $r$ and $R$ respectively. For $R\le r$ the inequality (\ref{sph-ineq}) is obvious, so we consider $R>r$. 

The set $T\cap S$ is the complement of the $\varepsilon$-neighborhood of two opposite points $X_0$ in $S$, where $\varepsilon = \arccos{r/R}$. Since $K$ has $B$ as the inscribed ball, the set $X'=\partial K\cap B$ contains the origin in its convex hull. It is easy to see that the set $X = R/r X'$ is not contained in a hemisphere, and its $\varepsilon$-neighborhood is disjoint with $K\cap S$. By Lemma~\ref{eps-neighborhoods}:
$$
\sigma(X+\varepsilon) \ge \sigma (X_0+\varepsilon),
$$
hence 
$$
\sigma(K\cap S) \le \sigma (S\setminus(X+\varepsilon)) \le \sigma (S\setminus (X_0+\varepsilon)) = \sigma(T\cap S),
$$
which is exactly (\ref{sph-ineq}).
\end{proof}

Now we deduce the following (the same as~\cite[Theorem~2]{bezdek2010covering}):

\begin{lemma}
\label{isoperimetry}
Suppose $K$ is a convex body in $\mathbb S^n$ with inscribed ball $B$. Then 
$$
\sigma(K) \le \sigma (K_0),
$$
where $K_0=H_0\cap H_1$ is an intersection of two hemispheres with inscribed ball $B$. Note that $\sigma(K_0)$ equals $\frac{\alpha \sigma_n}{2\pi}$, where $\alpha$ is the angle between $H_0$ and $H_1$.
\end{lemma}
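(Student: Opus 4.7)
The plan is to reduce Lemma~\ref{isoperimetry} to its Euclidean counterpart, Lemma~\ref{isoperimetry-eu}, by applying the gnomonic (central) projection from the center $o$ of the inscribed ball $B$ of spherical radius $r$. Let $H_o \subset \mathbb S^n$ be the open hemisphere centered at $o$ and let $\pi \colon H_o \to \mathbb R^n$ denote the gnomonic projection onto the tangent hyperplane at $o$. Since $\pi$ sends great-circle arcs to Euclidean line segments, it maps spherically convex subsets of $H_o$ to Euclidean convex sets, and by rotational symmetry about $o$ the ball $B$ maps to the Euclidean ball $B'$ of radius $\tan r$ centered at $0$. Moreover, every great circle tangent to $\partial B$ maps to a Euclidean hyperplane tangent to $B'$; hence each bounding hemisphere of the extremal slab $K_0$ --- the one with two hemispheres tangent to $B$ on opposite sides, whose two defining inequalities combine to force $x_{n+1} \ge 0$, so that $K_0 \subseteq \overline{H_o}$ --- projects to a Euclidean half-space tangent to $B'$. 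Consequently, $T' := \pi(K_0 \cap H_o)$ is a Euclidean plank with $B'$ as inscribed ball.

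Second, I would transfer the data of $K$. Supposing first that $K \subseteq \overline{H_o}$, the image $K' := \pi(K)$ is a Euclidean convex body containing $B'$. The spherical maximality of $B$ as inscribed ball is encoded in the first-order condition that the contact set $\partial K \cap \partial B$ has the origin in the convex hull of its image in the tangent space at $o$; since gnomonic projection agrees with the tangent-space map along radii from $o$, this transfers to the statement that $\partial K' \cap \partial B'$ contains the origin of $\mathbb R^n$ in its convex hull, which is exactly the hypothesis used inside the proof of Lemma~\ref{isoperimetry-eu}.

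Third, I would identify the pulled-back measure. The spherical volume $\sigma$ pulled back to $\mathbb R^n$ via $\pi^{-1}$ is the rotationally symmetric, absolutely continuous measure
\[
d\mu(y) = (1+|y|^2)^{-(n+1)/2}\,dy.
\]
Applying Lemma~\ref{isoperimetry-eu} to $(K', B', T', \mu)$ then yields
\[
\sigma(K) = \mu(K') \le \mu(T') = \sigma(K_0 \cap H_o) = \sigma(K_0),
\]
where the last equality uses $K_0 \subseteq \overline{H_o}$ shown above. The formula $\sigma(K_0) = \alpha \sigma_n /(2\pi)$ is the standard volume of a spherical lune of dihedral angle $\alpha$, which equals $2r$ in the extremal configuration.

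The main obstacle is justifying the containment $K \subseteq \overline{H_o}$, which is not immediate from the sole hypothesis that the maximal inscribed ball of $K$ is centered at $o$. The plan is to handle this by a direct spherical-convexity argument: if $K$ extended past the equator of $H_o$, then $K$ would contain a geodesic from $o$ of length greater than $\pi/2$, and a suitable perturbation of the center of $B$ along this geodesic combined with a slight expansion should produce a strictly larger ball inside $K$, contradicting the maximality of $B$. Alternatively, under the convention that spherically convex bodies lie in an open hemisphere, one applies the argument to $K \cap \overline{H_o}$ and controls the residual by the symmetric argument run from the antipode.
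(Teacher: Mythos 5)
Your proposal is exactly the paper's argument — the paper's entire proof is the one-line remark that the lemma is ``obtained from Lemma~\ref{isoperimetry-eu} by central projection that takes the center of $B$ to the origin'' — and your filled-in details (convexity and tangency preserved by the gnomonic projection, the pulled-back density $(1+|y|^2)^{-(n+1)/2}$, the edge of the lune lying on the equator so that $K_0$ projects to a genuine plank) are all correct. The containment $K\subseteq\overline{H_o}$ that you flag as the main obstacle follows cleanly from the contact condition you already invoke: writing each supporting hemisphere at a contact point $p_i$ as $\{x:\langle x,u_i\rangle\ge 0\}$ with $u_i=\sin r\,o-\cos r\,w_i$, the relation $\sum\lambda_i w_i=0$ gives $\sum\lambda_i u_i=\sin r\,o$, whence $\langle x,o\rangle\ge 0$ for every $x\in K$.
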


\begin{proof}
Obtained from Lemma~\ref{isoperimetry-eu} by central projection that takes the center of $B$ to the origin in $\mathbb R^n$.
\end{proof}

Now Theorem~\ref{bez-schn} follows from Lemma~\ref{isoperimetry} by bounding from above the volume of every $K_i$ in terms of $r(K_i)$.

\section{The hyperbolic Kadets theorem}

It is interesting that the Kadets theorem does not hold for hyperbolic space unlike the spherical case mentioned above. We skip the calculation here because of the negativity of this result, but the figures below should be sufficiently convincing.

Consider a sufficiently large disk $\Omega$ and a regular hexagon inscribed in it. Let us cover this disk by two convex shapes, which are drawn in Fig.~\ref{fig:hypercovering} (this is the Poincar\'e model). The maximal inscribed disk of a shape is drawn by a dashed line. Since it does not contain the center of $\Omega$ its radius is less than half of the radius of $\Omega$. 

\begin{center}
\includegraphics{figure-2.mps}
\hskip 1cm
\includegraphics{figure-3.mps} 
\f{Disk covering.} \label{fig:hypercovering}
\end{center}

Note that this counterexample uses essentially that $C_1$ and $C_2$ do intersect. The authors do not know whether the Kadets theorem holds for partitions in the hyperbolic space.

\section{Conjectures}

Let us mention three conjectures, which are related to the theme of the article. The first two conjectures belong to Mikhail~Smurov, who stated them in Kvant~\cite{smurov1998pokritie1_en}, a Russian journal for high school students.

\begin{conjecture}[M.~Smurov, 1998]
\label{con:smurov ball}
There exists a constant $C_d$ possibly depending on the dimension $d$ with the following property: For any collection of planks in $\mathbb R^d$ with sum of widths at least $C_d$, it is possible to translate them so that they cover the unit ball.
\end{conjecture}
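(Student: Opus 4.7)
The plan is to establish existence of a finite $C_d$ by combining a discretization of plank directions on $S^{d-1}$ with a tilted-stacking lemma, reducing ultimately to the elementary observation that a single slab of width $\ge 2$ already contains the unit ball in $\mathbb R^d$.

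First, I would fix $\varepsilon>0$ and choose an $\varepsilon$-net $\{c_1,\dots,c_N\}\subset S^{d-1}$ of cardinality $N=N_d(\varepsilon)$. Each plank $P_i$ (with direction $v_i$ and width $w_i$) is grouped with its nearest net direction $c_{\pi(i)}$, giving total group widths $W_j=\sum_{\pi(i)=j}w_i$ with $\sum_j W_j\ge C_d$, so by pigeonhole some $W_{j^*}\ge C_d/N$. Second, I would prove a stacking lemma: given planks of widths $w_1,\dots,w_m$ whose directions all lie within $\varepsilon$ of a fixed unit vector $c$, suitable translates cover the slab $\{p\in B:\alpha\le\langle p,c\rangle\le\beta\}$ with $\beta-\alpha\ge W-2m\varepsilon$, where $W=\sum w_i$. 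The construction places the planks end-to-end along $c$; since each true direction $v_i$ is tilted from $c$ by at most $\varepsilon$ and $|p|\le 1$ inside $B$, the inclusion $\langle p,c\rangle\in[s_i+\varepsilon,\,s_i+w_i-\varepsilon]$ forces $\langle p,v_i\rangle\in[s_i,s_i+w_i]$, so each plank reliably covers a $c$-slab of width $w_i-2\varepsilon$.

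Combining these, since a single slab of width $\ge 2$ already contains $B$, it suffices that the effective slab from group $j^*$ has width $\ge 2$, giving the preliminary estimate $C_d\lesssim 2N_d(\varepsilon)+2m_{j^*}\varepsilon N_d(\varepsilon)$, which can in principle be optimized in $\varepsilon$. If no single group suffices, one combines slabs from several groups: for instance, $d$ slabs of widths $\ell_1,\dots,\ell_d$ in pairwise orthogonal directions cover $B$ as soon as $\sum_i\ell_i^2\ge 4$ (which holds whenever $\sum_i\ell_i\ge 2\sqrt d$ in the balanced case, by Cauchy--Schwarz).

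The main obstacle is the dependence on $m_{j^*}$, the number of planks in a group, which may be arbitrarily large compared to its total width when individual planks are thin, and then the $2m_{j^*}\varepsilon$ correction dominates the bound. To remove this I would attempt a multi-scale refinement: partition the planks into dyadic width classes $[\delta,2\delta)$ and apply the discretization in each class with a scale-matched mesh $\varepsilon\asymp\delta$. This guarantees that the loss per plank is smaller than the plank's own width, so a constant fraction of each class's total width survives the stacking. The delicate point---and most likely the step requiring a genuinely new idea---is to combine the slabs produced at different scales into an actual covering of $B$: one needs either that the scale-wise chosen directions are automatically well-spread across $S^{d-1}$, or an inductive peeling argument reducing step-by-step to a lower-dimensional version of the problem on a spherical shell.
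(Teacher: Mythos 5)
This statement is a \emph{conjecture}: the paper offers no proof of it, and explicitly records that only the planar bound $C_2<2+\pi$ is known (from the cited Kvant article of Smurov and Spivak) while the case $d>2$ remains open. So there is no argument in the paper to compare yours against; the question is only whether your attempt closes the problem, and it does not.

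The gap is the one you yourself flag at the end, and it is not a technicality but the entire content of the conjecture. Your discretization trades a plank of width $w_i$ for an effective slab of width $w_i-2\varepsilon$ in a net direction, so it only retains a constant fraction of the width when $\varepsilon\lesssim w_i$; but then the net has $N_d(\varepsilon)\asymp\varepsilon^{-(d-1)}$ directions, and the pigeonhole gives a group of total width only about $C_d/N_d(\varepsilon)$, which tends to $0$ as the planks get thinner. Hence no fixed $C_d$ ever forces a single group to produce a full slab of width $2$, and the adversarial configuration (many very thin planks with well-spread directions) defeats the single-scale argument outright. The multi-scale refinement does not repair this: each dyadic class $[\delta,2\delta)$ would need total width $\gtrsim\delta^{-(d-1)}$ to yield even one usable slab, the number of classes is unbounded while the budget $C_d$ is fixed, and your only tool for combining slabs of different directions --- the observation that $d$ \emph{pairwise orthogonal} slabs with $\sum_i\ell_i^2\ge 4$ cover $B$ --- does not apply to the $N\gg d$ non-orthogonal net directions you actually obtain. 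In effect you have reduced the conjecture to the problem of covering the ball by translates of thin planks in many prescribed directions, which is the conjecture itself up to constant factors. The stacking lemma and the orthogonal-slab observation are correct as far as they go, but the ``genuinely new idea'' you ask for in the last sentence is precisely what is missing, and it is why the problem is still open for $d>2$.
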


In~\cite{smurov1998pokritie1_en} it is proved that $C_2<2+\pi$ and the case $d>2$ remains open. Here is another conjecture:

\begin{conjecture}[M.~Smurov, 1998]
\label{con:smurov plane}
Suppose planks with sum of widths $1$ in the plane are given. Than for any convex body $B$ with perimeter not greater than $2$ it is possible to translate the planks in so that they cover $B$.
\end{conjecture}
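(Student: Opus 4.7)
The plan is to recast the conjecture as a convex partition problem: if one can partition $B$ into convex pieces $B_1,\dots,B_k$ with $w_{B_i}(u_i)\le w_i$ for each $i$, then translating each plank $P_i$ to cover $B_i$ gives the desired covering of $B$. By standard approximation, $B$ may be taken smooth and strictly convex and the axes $u_1,\dots,u_k$ pairwise distinct. The base case $k=1$ is immediate: the diameter of a planar convex body is at most half of its perimeter (the two boundary curves joining extremal points each have length at least the diameter), hence $w_B(u_1)\le 1=w_1$, and a single translation of $P_1$ covers $B$.

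For $k\ge 2$, parametrize $\partial B$ by the outward normal angle $\theta\in[0,2\pi)$, so that the arc-length element is $\rho(\theta)\,d\theta$ and Cauchy's formula reads $\int_0^{2\pi}\rho(\theta)\,d\theta=\operatorname{perim}(B)\le 2$. To each plank $P_i$ associate the pair of antipodal boundary points $p_i^{\pm}$ with outward normals $\pm u_i$, and seek a partition of $\partial B$ into $2k$ arcs, one centered at each $p_i^{\pm}$, whose length is at most $2w_i$ (equivalently, at most $w_i$ on each side of its special point). The elementary estimate that the drop of $u_i\cdot x$ from the extremum at $p_i^{\pm}$ to a boundary point at arc-length $s$ away is at most $s$ then guarantees that the caps cut off by these arcs have depth at most $w_i$ in direction $u_i$, so that $P_i$ placed flush with $B$ at $p_i^+$ or $p_i^-$ covers such a cap. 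The total boundary budget is $2\sum w_i=2\ge\operatorname{perim}(B)$, just barely sufficient.

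Two obstacles stand in the way. First, covering $\partial B$ by caps does not cover the interior of $B$: two antipodal end-caps of a disk leave a central strip uncovered. The remedy is to upgrade the boundary partition into a genuine convex partition of $B$, allowing some planks to cover middle slabs rather than end-caps so that the $2k$ boundary arcs interlock across $B$. Second, the midpoint condition implies local inequalities $L_i\le w_i+w_{i+1}$ between arc lengths of consecutive special points on $\partial B$ and the neighbouring plank widths, which need not hold pointwise even though they hold in total. One natural route around both obstacles is induction on $k$: pick a plank, use it to carve $B$ into either a cap (one residual piece) or a middle slab (two residual convex pieces), and recurse with the remaining $k-1$ planks of total width $1-w_i$. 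The required perimeter accounting
\[
\operatorname{perim}(B')+\operatorname{perim}(B'')=\operatorname{perim}(B)-\operatorname{arc}_{\mathrm{cut}}+\operatorname{chord}_{\mathrm{top}}+\operatorname{chord}_{\mathrm{bot}},
\]
combined with the projection estimate $\operatorname{arc}_{\mathrm{cut}}\ge 2w_i$, almost yields $\operatorname{perim}(B')+\operatorname{perim}(B'')\le 2-2w_i$, but the positive chord contributions spoil the clean inequality. The hardest step, where a genuinely new idea seems required, is to absorb these chord terms --- for instance by slicing through a critical chord whose length is compensated by an analogous boundary contribution, or by replacing the induction with a global variational argument over the translations $t_i$ of the planks, minimising the measure of the uncovered region.
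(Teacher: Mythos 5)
This statement is an open conjecture: the paper states it without proof (it appears in the final section ``Conjectures'' and is attributed to Smurov), so there is no argument in the paper to compare yours against, and any complete proof you supplied would be new mathematics rather than a reconstruction. Your proposal, however, is not a complete proof, and you say so yourself: the inductive step is exactly where the argument breaks, and no fix is supplied. Concretely, after cutting $B$ along one or two chords to carve out the piece assigned to the plank $P_i$, the two (or one) residual convex pieces satisfy
\[
\operatorname{perim}(B')+\operatorname{perim}(B'') \;=\; \operatorname{perim}(B)-\operatorname{arc}_{\mathrm{cut}}+2\bigl(\text{chord lengths}\bigr),
\]
since each cutting chord appears in the boundary of a residual piece \emph{and} in the boundary of the discarded cap or slab; even granting $\operatorname{arc}_{\mathrm{cut}}\ge 2w_i$, the chord terms make the right-hand side exceed $2-2w_i$ in general (a thin slab through the middle of a long body is the worst case), so the inductive hypothesis cannot be invoked for the remaining planks of total width $1-w_i$. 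The suggested remedies (``slicing through a critical chord whose length is compensated,'' ``a global variational argument'') are directions, not arguments.

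A second, more structural point: your reduction replaces the covering problem by the existence of a convex partition $B=B_1\cup\dots\cup B_k$ with $w_{B_i}(u_i)\le w_i$ for \emph{prescribed} directions $u_i$. This reduction is sound (a plank of width $w_i$ with normal $u_i$ can be translated to cover any convex set of width $\le w_i$ in direction $u_i$), but the existence of such a partition is essentially as strong as the conjecture itself and is not established by the boundary-budget count $2\sum w_i\ge\operatorname{perim}(B)$: as you note, arcs centered at the points $p_i^{\pm}$ cover only $\partial B$, and the local inequalities needed to interlock the pieces across the interior need not hold. Only the base case $k=1$ (diameter at most half the perimeter) is actually proved. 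The statement should be regarded as remaining open.
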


The following conjecture would be a strengthening of the Bang theorem for planks passing through the center of a ball. In this case we may pass to the sphere bounding the ball, introducing the following notation. Suppose $S'$ is an equatorial codimension $1$ subsphere of the sphere $\mathbb S^d$ and $S'_\varepsilon$ is its $\varepsilon$-neighborhood (in the spherical geometry). Call $S'_\varepsilon$ a \emph{plank on the sphere} and call $2\varepsilon$ its \emph{width}.

\begin{conjecture}
\label{con:karasev}
Suppose $\mathbb S^d$ is covered by planks. Then the sum of widths of these planks is at least $\pi$.
\end{conjecture}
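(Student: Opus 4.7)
The plan is to derive Conjecture~\ref{con:karasev} from the Bezdek--Schneider spherical Kadets theorem (Theorem~\ref{bez-schn}) by decomposing each plank into spherically convex pieces. A plank $P_i=\{x\in\mathbb{S}^d:|\langle x,v_i\rangle|\le\sin\varepsilon_i\}$ has spherical inradius exactly $\varepsilon_i$, realized by a ball centred anywhere on the equator $v_i^\perp$. Hence, if each $P_i$ could be partitioned into \emph{at most two} spherically convex subsets, every piece would have inradius at most $\varepsilon_i$, and Theorem~\ref{bez-schn} applied to the refined covering $\{P_i^{(j)}\}_{i,j}$ would yield
\[
\pi \;\le\; \sum_{i,j} r(P_i^{(j)}) \;\le\; \sum_i 2\varepsilon_i,
\]
which is precisely the conjecture.

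The main obstacle is that a plank is \emph{not} spherically convex. The two boundary points $p_\pm=(\pm\cos\varepsilon_i,0,\ldots,0,\sin\varepsilon_i)$ both lie on the top boundary of $P_i$, but $\langle p_+,p_-\rangle=-\cos(2\varepsilon_i)$, so their unique shortest geodesic has length $\pi-2\varepsilon_i$ and passes through the pole $v_i$, which is well outside $P_i$. This mechanism — almost-antipodal pairs with a geodesic forced through one of the forbidden caps $B_{\pi/2-\varepsilon_i}(\pm v_i)$ — defeats every obvious bisection: cutting $P_i$ by its equator $v_i^\perp$, by a great subsphere $w^\perp$ with $w\in v_i^\perp$, or by any hemisphere still leaves a putative convex half containing a rotated pair of the same sort. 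The hard part will therefore be to find an \emph{oblique} two-piece decomposition that routes the long antipodal geodesics around, rather than through, the forbidden polar caps. I would first try cuts by small (non-great) $(d-1)$-spheres sitting inside the plank, or by a pair of tilted hemispheres whose intersections with $P_i$ are checked convex by a direct geodesic computation; note that refining further into $m_i>2$ convex pieces with summed inradii $\le 2\varepsilon_i$ cannot help, since each piece is contained in $P_i$ and hence has inradius at most $\varepsilon_i$, forcing $m_i\le 2$ again.

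As a fallback, if no two-piece spherically convex partition of the plank exists, I would attempt a direct spherical induction on $k$ in the spirit of Bang's original proof. The base case $k=1$ is immediate (the single plank is $\mathbb{S}^d$ itself, of width $\pi$), and $k=2$ reduces to $\varepsilon_1+\varepsilon_2\ge\pi/2$ by unpacking the covering condition $d(v_1,\pm v_2)>(\pi/2-\varepsilon_1)+(\pi/2-\varepsilon_2)$ together with the identity $d(v_1,v_2)+d(v_1,-v_2)=\pi$. The inductive step would require a spherical substitute for Bang's translation move — rotating the remaining cover after peeling off a plank — and controlling how the spherical widths transform under such rotations on the positively curved $\mathbb{S}^d$, where no free translation group is available, is what I expect to be the principal difficulty.
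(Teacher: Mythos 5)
This statement is left as an open conjecture in the paper: no proof is given, and the remark immediately following it explicitly warns that the claim does \emph{not} follow from Theorem~\ref{bez-schn} precisely because spherical planks are not convex. Your proposal does not close that gap, and its primary strategy is not merely difficult but provably impossible. For the inequality $\sum_j r(P_i^{(j)})\le 2\varepsilon_i$ you need each piece to be contained in $P_i$ (otherwise the bound $r(P_i^{(j)})\le\varepsilon_i$ is lost), and hence you need at most two pieces per plank, as you correctly note. But for $d\ge 2$ the plank admits no partition into \emph{any finite number} of spherically convex subsets of itself. Indeed, consider the boundary sphere $\Sigma=\{x:\langle x,v_i\rangle=\sin\varepsilon_i\}$, which is a $(d-1)$-sphere of positive dimension contained in $P_i$. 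For any two distinct points $q_1,q_2\in\Sigma$ the midpoint of the minimizing geodesic is $(q_1+q_2)/|q_1+q_2|$, whose inner product with $v_i$ equals $2\sin\varepsilon_i/|q_1+q_2|>\sin\varepsilon_i$ since $|q_1+q_2|<2$. So the geodesic between \emph{every} pair of distinct points of $\Sigma$ leaves the plank, and a convex subset of $P_i$ can meet $\Sigma$ in at most one point; covering the continuum $\Sigma$ therefore requires infinitely many pieces. This is a strictly stronger obstruction than the almost-antipodal pairs you identified, and it rules out the ``oblique two-piece decomposition'' you hope to find.

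The fallback via a Bang-style induction is only a program: the base cases are fine, but the inductive step --- the spherical substitute for Bang's translation argument --- is exactly the missing content, and you acknowledge as much. As it stands, neither route yields a proof, which is consistent with the authors' decision to state the result as Conjecture~\ref{con:karasev} rather than a theorem. If you want a tractable direction, note that the volumetric bound of Lemma~\ref{isoperimetry} applies to convex bodies, so one would need either a genuinely non-volumetric argument (as Kadets's theorem requires for $n>2$) or a replacement for convexity adapted to the two polar ``forbidden caps'' of a plank.
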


\begin{remark}
Note that this result does not follow from Theorem~\ref{bez-schn} because the spherical planks are not convex in spherical geometry.
\end{remark}

% \bibliographystyle{spmpsci}
% \bibliography{../../../../Documents/Materials/MyPapers/BibTeX/Bibliography.bib}{}

\end{document}